\newtheorem{mytheo}{Theorem}[section]
\newtheorem{myprop}[mytheo]{Proposition}
\newtheorem{mycoro}[mytheo]{Corollary}
\newtheorem{mylemma}[mytheo]{Lemma}
\newtheorem{mydefi}[mytheo]{Definition}
\newcommand{\mymod}{\ensuremath{\negthickspace \negmedspace \pmod}}
\begin{document}

\title{Large Values of Newform Dedekind Sums}
\author{Georgia Corbett}
\address{Department of Mathematics \\ Bucknell University \\ Olin Science Building \\ Lewisburg, PA 17837 \\ USA}
\email{gsfc001@bucknell.edu}

\author{Matthew P. Young}
 
 \address{Department of Mathematics \\
 	  Texas A\&M University \\
 	  College Station \\
 	  TX 77843-3368 \\
 		U.S.A.}		
 \email{myoung@math.tamu.edu}
\thanks{This material is based upon work supported by the National Science Foundation under agreement No. DMS-2302210 (M.Y.).  Any opinions, findings and conclusions or recommendations expressed in this material are those of the authors and do not necessarily reflect the views of the National Science Foundation.
 }

\begin{abstract}
     We study a generalized Dedekind sum $S_{\chi_1,\chi_2}(a,c)$ attached to newform Eisenstein series $E_{\chi_1,\chi_2}(z,s)$. 
     Our work shows the Dedekind sum is rarely substantially larger than $\log^3 c$.  The method of proof first relates the size of the Dedekind sum to continued fractions.  A result of Hensley from 1991 then controls the average size of the maximal partial quotient in the continued fraction expansion of $a/c$. 
     
     We complement this result by 
computing approximate values of the Dedekind sum in some special cases, which in particular     
     produces examples of large values of the Dedekind sum.  
\end{abstract}
\maketitle

\section{Introduction}
\subsection{Background}
Let $h$ and $k$ be coprime integers with $k > 0$. The classical Dedekind sum is defined as $$s(h,k)=\sum_{n\hspace{-.5em}\pmod{k}} B_1\left(\frac{n}{c}\right)B_1\left(\frac{hn}{k}\right),$$ 
where $B_1$ denotes the first Bernoulli function defined by $$B_1(x) =
        \begin{cases}
            x-\lfloor x\rfloor -\frac{1}{2}, & \text{if } x\in \mathbb{R}\backslash\mathbb{Z}\\
            0, & \text{if } x\in \mathbb{Z}.
        \end{cases}$$        
In this paper, we focus on a generalization of the classical Dedekind sum associated to the newform Eisenstein series introduced by Stucker, Vennos, and Young in \cite{Stucker2020}. 
\begin{mydefi}Let $\chi_1,\chi_2$ be primitive nontrivial Dirichlet characters modulo $q_1,q_2$ respectively with  $\chi_1 \chi_2(-1)=1$. Let $\gamma=  
(\begin{smallmatrix}
        a & b\\
        c & d
    \end{smallmatrix}) \in \Gamma_0(q_1q_2)$ with $c\geq 1$. Then the newform Dedekind sum is defined by
\begin{equation}\label{newform}
    S_{\chi_1, \chi_2}(\gamma)= \sum_{j\hspace{-.5em}\pmod{c}}\sum_{n \hspace{-.5em}\pmod{q_1}}\overline{\chi_2}(j) \hspace{0.1cm}\overline{\chi_1}(n) B_1\left(\frac{j}{c}\right)B_1\left(\frac{n}{q_1}+\frac{aj}{c}\right).
\end{equation}
\end{mydefi}

The value distribution of the classical Dedekind sum has been studied extensively; for instance, see  \cite{Hickerson1977, Myerson1988, CFKS} for some results in this vein. In contrast, many basic questions about the distribution of values of the newform Dedekind sums remain open. 

Dillon and Gaston \cite{DG2020} proved an approximate 
formula for its second moment:
\begin{equation}\label{DG}
    \sum_{\substack{a\hspace{-.5em}\pmod{c} \\(a,c)=1}} |S_{\chi_1,\chi_2}(a,c)|^2 = q_1c^{2 +o(1)}.
    \end{equation}  
 This result shows the newform Dedekind sum has square-root cancellation on average, noting that the trivial bound applied to \eqref{newform} yields
\begin{equation}\label{trivial bound}
    |S_{\chi_1,\chi_2}(a,c)|\leq q_1c.
\end{equation}
In this paper, we are most interested in investigating the behavior as a function of $c$, so we largely consider $q_1,q_2$ as fixed. 
A natural question is if \eqref{trivial bound} can be improved substantially.  
One might guess that $S_{\chi_1, \chi_2}(a,c) \ll c^{1/2+o(1)}$, based on the square-root heuristic consistent with \eqref{DG}.
Proposition 3.8 of \cite{addtwists} indeed gives a bound of this quality for a cuspidal variant of the newform Dedekind sum.
Perhaps
surprisingly, no improvement of \eqref{trivial bound} is possible as De Leon and McCormick \cite{DM2023} showed that $S_{\chi_1,\chi_2}(a,c)$ can have size proportional to $c$:  
 \begin{mytheo}[De Leon, McCormick]\label{DW result}
      Let $\chi$ be the Legendre symbol mod $p$, and $k, \ell \in \mathbb{Z}$ with $k\geq 1$.  Then
    \begin{equation}
        S_{\chi,\chi}(1+\ell kp,kp^2)= \chi(-\ell ) \frac{k(p^2-1)}{12}.
    \end{equation}
 \end{mytheo}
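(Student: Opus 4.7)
The strategy is to exploit the specific form $a = 1+\ell kp$, $c = kp^2$ in order to reduce~\eqref{newform} to a Jacobi-sum evaluation, using crucially that $\chi$ is a quadratic character. Throughout I write $j = j'p + r$ with $0 \le j' \le kp-1$ and $1 \le r \le p-1$ (the summand vanishes unless $p \nmid j$), and set $u := (n + \ell r) \bmod p \in \{0,1,\ldots,p-1\}$.

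The first observation is that $\tfrac{aj}{c} = \tfrac{j}{c} + \ell\cdot\tfrac{j'p+r}{p} \equiv \tfrac{j}{c} + \tfrac{\ell r}{p} \pmod 1$, so $B_1(n/p + aj/c) = B_1(u/p + j/c)$. Setting $t := B_1(j/c) = j'/(kp) + r/(kp^2) - 1/2$ and using that $u/p + j/c$ never hits an integer (since $\gcd(r,p)=1$), a short case analysis on whether this quantity lies in $(0,1)$ or $(1,2)$ shows
\begin{equation*}
B_1\!\left(\tfrac{u}{p} + \tfrac{j}{c}\right) \;=\; t + \tfrac{u}{p} - \mathbf{1}\{j' \ge k(p-u)\}.
\end{equation*}
A direct computation of the inner sum over $j'$ then collapses every $u$-dependent contribution into a single clean term:
\begin{equation*}
\sum_{j'=0}^{kp-1} B_1\!\left(\tfrac{j}{c}\right) B_1\!\left(\tfrac{n}{p} + \tfrac{aj}{c}\right) \;=\; \sum_{j'} t^2 \;-\; \frac{k\,u(p-u)}{2p}.
\end{equation*}

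Next, sum over $n \bmod p$: the piece $\sum_{j'} t^2$ is independent of $n$, so it is killed by $\sum_n \chi(n)=0$. What remains is
\begin{equation*}
S_{\chi,\chi}(a,c) \;=\; -\frac{k}{2p}\sum_{r=1}^{p-1}\chi(r)\sum_{u=1}^{p-1}\chi(u-\ell r)\,u(p-u).
\end{equation*}
If $p \mid \ell$, then $\chi(u-\ell r)=\chi(u)$ separates out, and $\sum_r \chi(r)=0$ gives $S_{\chi,\chi}(a,c)=0$, matching $\chi(-\ell)=0$. Otherwise, I substitute $s = \ell r \bmod p$ (pulling out a factor of $\chi(\ell)$ via $\chi^2 = \chi_0$) and interchange summations to recast everything as $-\tfrac{k\chi(\ell)}{2p}\sum_u u(p-u)\sum_s \chi(s)\chi(u-s)$.

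The inner character sum is then $\sum_{s=1}^{p-1}\chi(s)\chi(u-s)$, which after the substitution $s = uy$ becomes the Jacobi sum $J(\chi,\chi) = \sum_{y\ne 0,1}\chi(y)\chi(1-y) = -\chi(-1)$, valid because $\chi$ is quadratic. Combining with the elementary evaluation $\sum_{u=1}^{p-1} u(p-u) = p(p^2-1)/6$ and the identity $\chi(\ell)\chi(-1)=\chi(-\ell)$ yields exactly $\chi(-\ell)\,k(p^2-1)/12$. The main technical obstacle is the careful bookkeeping of the indicator $\mathbf{1}\{j' \ge k(p-u)\}$ and verifying that the $u$-dependent terms truly collapse as claimed; the conceptual heart is the Jacobi-sum identity $J(\chi,\chi)=-\chi(-1)$, which forces the answer to factor as $\chi(-\ell)$ times a rational number and relies essentially on $\chi$ having order two rather than merely being primitive.
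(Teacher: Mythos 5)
Your proof is correct: I checked the key computations, namely the reduction $aj/c \equiv j/c + \ell r/p \pmod 1$, the identity $B_1(u/p + j/c) = t + u/p - \mathbf{1}\{j' \ge k(p-u)\}$, the collapse of the $j'$-sum to $\sum_{j'} t^2 - \tfrac{ku(p-u)}{2p}$ (the cross term $\tfrac{u}{p}\sum_{j'} t$ and the $r$-dependent part of the indicator sum do cancel exactly), the Jacobi-sum evaluation $J(\chi,\chi) = -\chi(-1)$, and the final arithmetic with $\sum_{u=1}^{p-1} u(p-u) = p(p^2-1)/6$. Note, however, that the paper does not prove this statement at all: it is quoted from De Leon and McCormick \cite{DM2023}, so there is no internal proof to compare against. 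The closest the paper comes is Theorem \ref{large value}, proved via the Eichler-integral representation \eqref{sum1}: one writes the Dedekind sum in terms of $f_{\chi_1,\chi_2}(\gamma z) - \psi(\gamma) f_{\chi_1,\chi_2}(z)$ at the balanced point $z = -\tfrac{d}{c} + \tfrac{i}{c}$, approximates the resulting geometric and Gauss sums, and recognizes the constant as a multiple of $L(2,\chi_1\chi_2)$; specializing to the Legendre symbol recovers $\chi(-\ell)\tfrac{k(p^2-1)}{12}$, but only up to an error $O(\log c)$. Your elementary route, working directly with the Bernoulli-sum definition \eqref{newform} and reducing to a Jacobi sum, buys the exact identity with no error term (which is precisely why the paper calls Theorem \ref{DW result} stronger in its special case), at the cost of relying essentially on $\chi$ being quadratic and on the special shape of $a$ and $c$; the paper's analytic method instead generalizes to arbitrary pairs of primitive characters and identifies the leading constant as a genuine $L$-value.
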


\subsection{Results}
Our first main result controls the frequency at which the newform Dedekind sum may take on large values.
 For $\chi_1$ and $\chi_2$ fixed, let
\begin{equation}
\label{eq:FalphaCdef}
    F( \alpha, C):= \#\{(a,c): 1\leq a<c\leq C, \thinspace (a,c) = 1, \thinspace q_1 q_2 | c,
\thinspace     
    |S_{\chi_1,\chi_2}(a,c)|> \alpha \log^3 C\}.
\end{equation}

\begin{mytheo}\label{thm1}
For  $C$ large and $\alpha>1$, then 
\begin{equation}
\label{eq:Fbound}
    F(\alpha, C)\ll_{\chi_1, \chi_2} \frac{C^2}{\alpha} + C^2 \frac{\log \log C}{\log C}.
\end{equation}
\end{mytheo}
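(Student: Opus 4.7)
The plan is to prove Theorem \ref{thm1} by combining a pointwise bound on $|S_{\chi_1,\chi_2}(a,c)|$ in terms of the continued fraction expansion of $a/c$ with a counting theorem of Hensley that controls the distribution of the associated partial quotients.

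\textbf{Step 1 (Continued-fraction bound).} The first task is to establish an inequality of the shape
\[
|S_{\chi_1,\chi_2}(a,c)| \ll_{\chi_1,\chi_2} \log(c) \cdot \sum_{i=1}^{k} b_i,
\]
where $a/c = [0;b_1,b_2,\ldots,b_k]$ is the continued fraction expansion. The mechanism is to iterate a cocycle/reciprocity law for $S_{\chi_1,\chi_2}$: writing the matrix $\gamma = \left(\begin{smallmatrix}a&b\\c&d\end{smallmatrix}\right) \in \Gamma_0(q_1q_2)$ as a product of the generators $T = \left(\begin{smallmatrix}1&1\\0&1\end{smallmatrix}\right)$ and $S = \left(\begin{smallmatrix}0&-1\\1&0\end{smallmatrix}\right)$ of $SL_2(\mathbb{Z})$ via the Euclidean algorithm on $a/c$, and tracking how the newform Dedekind sum transforms under each factor, one obtains a telescoping expression in which the $i$-th term is bounded by $b_i$ times a short character sum of length $O(q_1)$. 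The reciprocity framework of \cite{Stucker2020} provides the starting point; the work lies in iterating cleanly and controlling the accumulated character sums.

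\textbf{Step 2 (Hensley plus summation).} Granting Step 1, the event $|S_{\chi_1,\chi_2}(a,c)| > \alpha \log^3 C$ forces $\sum_i b_i(a/c) \gg \alpha \log^2 C$. Hensley's 1991 distribution theorem, applied uniformly over $c \leq C$ with $q_1 q_2 \mid c$, yields a bound of the form
\[
\#\Big\{ (a,c) : 1 \leq a < c \leq C,\ (a,c)=1,\ q_1 q_2 \mid c,\ \textstyle\sum_i b_i > T \Big\} \ll \frac{C^2 \log^2 C}{T} + C^2 \frac{\log \log C}{\log C},
\]
the first term being a first-moment (Markov) estimate reflecting that the average of $\sum_i b_i$ over reduced residues modulo $c$ is of order $\log^2 c$, and the second term absorbing a universal bad set of pairs $(a,c)$ whose continued fractions are genuinely exceptional. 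Taking $T = c_0 \alpha \log^2 C$ for a suitable absolute constant $c_0$ then completes the proof of \eqref{eq:Fbound}.

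\textbf{Main obstacle.} The principal difficulty lies in Step 1: producing a sharp bound for $|S_{\chi_1,\chi_2}(a,c)|$ in terms of the partial quotients of $a/c$, uniform in those quotients and with only a single logarithmic loss. For the classical Dedekind sum, such an estimate is essentially immediate from Dedekind reciprocity, but the presence of the characters $\chi_1, \chi_2$ in \eqref{newform} makes the newform version considerably more delicate: one must carry $\chi_1$ and $\chi_2$ through each step of the Euclidean algorithm and control the resulting character sums across $k = O(\log c)$ iterations without the logarithmic losses accumulating past $\log c$. Once Step 1 is in place, Step 2 is a direct application of Hensley's theorem combined with an elementary summation over $c$.
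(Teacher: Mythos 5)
There is a genuine gap, and it sits exactly where you flag the ``main obstacle'': your Step 1 is a strategy, not a proof, and the strategy as described faces a structural obstruction. The transformation law underlying $S_{\chi_1,\chi_2}$ is the crossed-homomorphism property of $\phi_{\chi_1,\chi_2}(\gamma)=f(\gamma z)-\psi(\gamma)f(z)$, and this quantity is independent of $z$ \emph{only} for $\gamma\in\Gamma_0(q_1q_2)$. The generators $S=\left(\begin{smallmatrix}0&-1\\1&0\end{smallmatrix}\right)$ and the intermediate matrices produced by running the Euclidean algorithm on $a/c$ do not lie in $\Gamma_0(q_1q_2)$, so there is no telescoping identity for $S_{\chi_1,\chi_2}$ along the word in $S$ and $T$ analogous to Dedekind reciprocity for the classical sum. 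Making such an argument work would require a genuinely new, iterable reciprocity law for the twisted sums (controlling the accumulated character sums across $O(\log c)$ steps), and you do not supply it. The paper avoids this entirely: it evaluates $\phi(\gamma)$ at the balanced point $z=-\tfrac{d}{c}+\tfrac{i}{c}$, $\gamma z=\tfrac{a}{c}+\tfrac{i}{c}$, expands $f_{\chi_1,\chi_2}$ as a series, sums the geometric series, and bounds the result by $\sum_{l<c'}\tfrac{1}{l\|la/c'\|}\leq 18\,D(a,c')\log^2 c'$ using a lemma of Korobov, where $D(a,c')$ is the \emph{maximal} partial quotient of $a/c'$ with $c'=c/q_2$ (plus a symmetry argument, Corollary \ref{coro:digitsymmetry}, to handle the $d$-side). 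Note also that the relevant denominator is $c'=c/q_2$, not $c$; this detail matters for the subsequent counting.

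Your Step 2 also misattributes the counting input. Hensley's theorem (Theorem \ref{Hensley}) concerns the distribution of the \emph{maximal} partial quotient $D(a,c)$, not the sum $\sum_i b_i$; it does not ``yield'' a first-moment bound for $\sum_i b_i$. The estimate you want --- that the average of $\sum_i b_i$ over reduced $a\pmod c$ is $\asymp\log^2 c$ --- is true, but it is a separate classical result (essentially equivalent to averaging $\sum_l \tfrac1{l\|la/c\|}$), and if you had it, Markov's inequality would give $\ll C^2\log^2 C/T$ with no need for the second error term or for Hensley at all. The paper instead combines its pointwise bound $|S_{\chi_1,\chi_2}(a,c)|\leq Q\,D(a,c')\log^2 c'$ with Hensley's theorem directly, via the complementary count $G(\alpha,C)\ll C^2/\alpha+C^2\tfrac{\log\log C}{\log C}$ of Corollary \ref{large partial quotient}. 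In short: your Step 2 could be repaired by citing the correct moment estimate, but Step 1 --- the heart of the theorem --- is missing.
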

Theorem \ref{thm1} shows that the Dedekind sum is rarely much larger than $\log^3 C$. 
We note that for large values of $\alpha$, say $\alpha > q_1 C$, then this upper bound is worse than the trivial bound \eqref{trivial bound}.  Also, for comparison we note that a simple application of \eqref{DG} implies
$F(\alpha, C) \ll \alpha^{-2} C^{3+o(1)}$, which is stronger than \eqref{eq:Fbound} only for $\alpha \gg C^{1/2+o(1)}$.

As a key intermediate step in the proof of Theorem \ref{thm1}, we have the following result.
\begin{mytheo}
\label{bound of Dedekind sum with partial fractions}
Let $c'=\frac{c}{q_2}$ and $\gcd(a,c)=1$. Let $D(a,c')$ be the largest partial quotient of the continued fraction expansion of $\frac{a}{c'}$.  There exists a constant $Q = Q(\chi_1, \chi_2)$ such that
$$|S_{\chi_1,\chi_2}(a,c)|\leq Q \cdot D(a,c')  \log^2 c'.$$
\end{mytheo}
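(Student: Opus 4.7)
The plan is to expand each $B_1$ factor in \eqref{newform} as a truncated Fourier series, evaluate the resulting inner sums over $j \pmod{c}$ and $n \pmod{q_1}$ using Gauss sums, and bound the output by a weighted lattice sum controlled by the continued fraction expansion of $a/c'$. Specifically, the lattice $\Lambda = \{(m_1, m_2) \in \mathbb{Z}^2 : c' \mid m_1 + a m_2\}$ has short vectors given by the convergents of $a/c'$, and the largest partial quotient $D(a,c')$ dominates the sum $\sum 1/|m_1 m_2|$ over $\Lambda$.

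\textbf{Fourier expansion and Gauss sums.} I would apply the truncated identity $B_1(x) = -\frac{1}{2\pi i}\sum_{0 < |m| \leq M}\frac{e(mx)}{m} + E_M(x)$, where $|E_M(x)| \ll \min(1,(M\|x\|)^{-1})$ and $M$ is chosen of order $c$, to both $B_1$ factors in \eqref{newform}. The inner $n$-sum becomes the Gauss sum $\sum_n \overline{\chi_1}(n) e(m_2 n/q_1) = \chi_1(m_2)\tau(\overline{\chi_1})$, vanishing unless $\gcd(m_2, q_1) = 1$. Writing $c = q_2 c'$ and separating $j$ by its residues modulo $q_2$ and modulo $c'$, the inner $j$-sum evaluates to $c'\cdot\chi_2((m_1+am_2)/c')\tau(\overline{\chi_2})$ when $c' \mid m_1+am_2$ and $\gcd((m_1+am_2)/c', q_2) = 1$, and vanishes otherwise. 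A routine argument using $\sum_{j \pmod{c}} \min(1,(M\|j/c\|)^{-1}) \ll (c/M)\log c$ bounds the contribution from the $E_M$ error terms by $O_{\chi_1, \chi_2}(\log^2 c)$, which is absorbed into the target bound.

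\textbf{Lattice sum via convergents.} The theorem then reduces to the key estimate
\[
    \sum_{\substack{1 \leq |m_1|, |m_2| \leq M \\ c' \mid m_1 + am_2}} \frac{1}{|m_1 m_2|} \ll \frac{D(a, c') \log^2 M}{c'}.
\]
Here I would use the convergents $p_j/q_j$ of $a/c'$: each vector $v_j = (p_j c' - a q_j, q_j)$ lies in $\Lambda$ with coordinate product $|v_j^{(1)} v_j^{(2)}| \asymp c'/a_{j+1}$, by the classical bounds $|p_j c' - aq_j| \asymp c'/q_{j+1}$ and $q_{j+1}/q_j \asymp a_{j+1}$. A dyadic decomposition in $|m_1|$ and $|m_2|$, combined with the lattice-point count $O(AB/c' + 1)$ in a box of area $AB$, produces two logarithmic factors from the dyadic ranges. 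The ``$+1$'' contributions concentrate along the convergent sequence and sum to $O(\sum_j a_{j+1}/c')$, which is bounded by $O(D(a,c') \log c'/c')$ using that there are $O(\log c')$ convergents.

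\textbf{Main obstacle.} The principal technical challenge is the geometry-of-numbers analysis in the lattice-sum estimate, particularly showing that the ``extra'' lattice points beyond the generic density $AB/c'$ really are concentrated near the convergents $v_j$, so that their total contribution is controlled by $D(a, c')$ rather than something larger. Handling the coprimality conditions $\gcd(m_2, q_1) = 1$ and $\gcd((m_1+am_2)/c', q_2) = 1$ via Möbius inversion only affects the constant $Q(\chi_1, \chi_2)$ and not the shape of the bound.
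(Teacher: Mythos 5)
Your proposal is correct in outline but takes a genuinely different route from the paper. The paper never touches the finite double sum \eqref{newform}: it uses the analytic representation \eqref{sum1}, $S_{\chi_1,\chi_2}(a,c)=\frac{\tau(\overline{\chi_1})}{\pi i}\phi_{\chi_1,\chi_2}(\gamma)$ with $\phi(\gamma)=f(\gamma z)-\psi(\gamma)f(z)$, evaluates at the balanced point $z=-\frac{d}{c}+\frac{i}{c}$ (so $\gamma z=\frac{a}{c}+\frac{i}{c}$), sums the geometric series in $k$, truncates at $l<c'$, and bounds the result by $\sum_{l<c'}\frac{1}{l\|la/c'\|}$, to which it applies Korobov's estimate \eqref{Korobov2} directly. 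Because both $f(\frac{a}{c}+\frac{i}{c})$ and $f(-\frac{d}{c}+\frac{i}{c})$ appear, the paper also needs the continued-fraction reversal argument (Lemma \ref{lemma:reversal} and Corollary \ref{coro:digitsymmetry}) to convert $D(d,c')$ into $D(a,c')$. Your Fourier-expansion route works entirely with $a$ and $c$, so it bypasses both the Eichler-integral machinery and the digit-reversal lemma, which is a genuine simplification of the structure; your Gauss-sum evaluations and the $O(\log^2 c)$ error analysis are routine and correct. The one place where you are doing more work than necessary is the ``key estimate'': after reducing $m_1$ to its least residue $m_1\equiv -am_2\pmod{c'}$ and summing the remaining arithmetic progression, your lattice sum is exactly $\frac{1}{c'}\sum_{m_2}\frac{1}{|m_2|\,\|am_2/c'\|}$ plus harmless terms, i.e. a rescaling of \eqref{Korobov2} (extended from $m_2\le c'$ to $m_2\le M\asymp q_2c'$ at the cost of a $q_2$-dependent constant via \eqref{Korobov1}). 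So the geometry-of-numbers analysis you flag as the main obstacle --- controlling the ``$+1$'' dyadic contributions via the convergents $v_j=(p_jc'-aq_j,\,q_j)$ with $|v_j^{(1)}v_j^{(2)}|\asymp c'/a_{j+1}$ --- amounts to reproving the cited Korobov lemma; it can be carried out along the lines you sketch, but it would be cleaner (and would close the only real gap in your writeup) to quote \eqref{Korobov2} at that point, exactly as the paper does.
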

Theorem \ref{bound of Dedekind sum with partial fractions} relates newform Dedekind sums to the subject of continued fractions which has a rich literature. In \cite{Hensley1991}, Hensley proved a formula for the frequency of values for which $D(a,c')\ll \log c'$ (see Theorem \ref{Hensley} below for the precise statement). Hensley's result is key in our proof of Theorem \ref{thm1}. 

Theorem \ref{bound of Dedekind sum with partial fractions} shows that the Dedekind sum may take a large value only when $a/c'$ has a large partial quotient.  In light of this apparent obstruction, it is natural to study the Dedekind sum at such pairs $(a,c)$.  The most obvious choices to consider are of the form $a=1$, $1+c'$, $2+c'$, etc.
Note that in the notation from Theorem \ref{DW result}, $\frac{a}{c'} = \frac{1 + \ell kp}{kp} = \ell + \frac{1}{kp}$, so $D(a,c') = k p$ in this case, which is consistent with Theorem \ref{bound of Dedekind sum with partial fractions} since the implied constant therein is allowed to depend on $p$.  As a partial generalization of Theorem \ref{DW result}, we have the following result which produces large values of the Dedekind sum for more general pairs of characters (with a light technical restriction).
\begin{mytheo}
\label{large value}
    Suppose $a=1+nc'$ and $d = 1 - mc'$, with $ad \equiv 1 \pmod{c}$.
    Then 
    \begin{equation*}
        S_{\chi_1,\chi_2}(a,c) = \beta(\chi_1, \chi_2, m,n) c' + O(\log c'),
    \end{equation*}
    where
    \begin{equation}
    \beta(\chi_1, \chi_2, m,n) = 
    \frac{\tau(\overline{\chi_1}) \tau(\overline{\chi_2})}{4 \pi^2 i} L(2, \chi_1 \chi_2)
\Big((1+i) \chi_2(n) - (1-i) \chi_2(m)  \overline{\chi_2}(d) \Big).
    \end{equation}
\end{mytheo}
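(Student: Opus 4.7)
My plan is to attack this by direct Fourier expansion of the two Bernoulli factors in \eqref{newform}. Writing $B_1(x) = -(2\pi i)^{-1}\sum_{k \ne 0} e(kx)/k$ and swapping orders of summation, the inner sum over $n \pmod{q_1}$ produces $\chi_1(k_2)\tau(\overline{\chi_1})$. For the $j$-sum modulo $c$, I would split $j = \alpha + q_2 \beta$ with $\alpha$ modulo $q_2$ and $\beta$ modulo $c'$ and invoke the hypothesis $a = 1 + nc'$, which gives the key identity $aj/c = j/c + nj/q_2$. The $\beta$-sum then imposes $c' \mid (k_1+k_2)$; writing $k_1 = -k + \ell c'$ with $k = k_2$, the $\alpha$-sum produces $\chi_2(\ell + nk)\tau(\overline{\chi_2})$. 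Collecting everything yields the exact identity
\[
S_{\chi_1,\chi_2}(a,c) \;=\; -\frac{c'\,\tau(\overline{\chi_1})\,\tau(\overline{\chi_2})}{4\pi^2}\,\sum_{k \ne 0}\frac{\chi_1(k)}{k}\sum_{\substack{\ell \in \mathbb{Z} \\ \ell c' \ne k}}\frac{\chi_2(\ell + nk)}{\ell c' - k}.
\]

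The main term arises from the $\ell = 0$ contribution, whose summand is $-\chi_2(nk)/k$. Summing this against $\chi_1(k)/k$ and using that $\chi_1\chi_2$ is even yields $-2\chi_2(n)L(2,\chi_1\chi_2)$, contributing
\[
\frac{c'\,\tau(\overline{\chi_1})\tau(\overline{\chi_2})}{2\pi^2}\,\chi_2(n)\,L(2,\chi_1\chi_2)
\]
to $S_{\chi_1,\chi_2}(a,c)$. To reconcile this with $\beta c'$, I would use the compatibility $ad \equiv 1 \pmod{q_2}$ (which follows from $ad \equiv 1 \pmod{c}$): substituting $a = 1+nc'$ and $d = 1-mc'$ yields $n \equiv ma \pmod{q_2}$, hence $m \equiv nd \pmod{q_2}$, so $\chi_2(m)\overline{\chi_2}(d) = \chi_2(n)$. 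The bracket in the definition of $\beta$ therefore collapses to $(1+i)\chi_2(n) - (1-i)\chi_2(n) = 2i\,\chi_2(n)$, and the prefactor $\frac{1}{4\pi^2 i}\cdot 2i = \frac{1}{2\pi^2}$ matches the contribution computed above.

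The remaining task is to bound the $\ell \ne 0$ contribution by $O(\log c')$. My tool here is the cotangent identity $\sum_{u \in \mathbb{Z}}\chi_2(u)/(u - y) = (\pi/q_2)\sum_{r \pmod{q_2}}\chi_2(r)\cot(\pi(r-y)/q_2)$, applied with $y = ka/c'$ to put the inner sum in closed form. Taylor-expanding the cotangent around each residue class $r' \pmod{q_2}$, the singular class $r' \equiv 0$ reproduces the $\ell = 0$ main term at leading order, while the nonsingular classes are bounded in terms of $q_2$ alone and contribute $O(1/c')$ per $k$; after pairing with $\chi_1(k)/k$ and summing over $|k| \le c'$ this multiplies up to the claimed $O(\log c')$ via the harmonic tail $\sum_{1 \le k \le c'} 1/k$. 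The main obstacle lies in this last step: the Fourier series for $B_1$ is only conditionally convergent, so the double sum must be handled via smoothed truncation, and the regime $|k| \sim c'$ is especially delicate since the cotangent Taylor expansion breaks down there. In that regime I would fall back on direct estimates in the style of classical Dedekind sum arguments, using $\gcd(a,c) = 1$ to control the small denominators $\|aj/c\|$ appearing in the original sum.
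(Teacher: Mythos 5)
Your route---expanding both Bernoulli factors in \eqref{newform} into Fourier series and resolving the $j$-sum with $a=1+nc'$---is genuinely different from the paper's, which works instead with the Eichler-integral formulation \eqref{sum1}: the paper evaluates the absolutely convergent series $f_{\chi_1,\chi_2}$ at the two points $\tfrac{a}{c}+\tfrac{i}{c}$ and $\tfrac{-d}{c}+\tfrac{i}{c}$, truncates at $l=c'$ using exponential decay (Lemma \ref{f gamma z up to c}), and Taylor-expands $(1-e(l(\delta+i)/c'))^{-1}$; the two evaluations are what produce the two terms of $\beta$. Your exact identity and the $\ell=0$ main term are correct (I checked the algebra), and your observation that $ad\equiv 1\pmod{c}$ forces $m\equiv nd\pmod{q_2}$, hence $\chi_2(m)\overline{\chi_2}(d)=\chi_2(n)$ and the bracket collapses to $2i\,\chi_2(n)$, is a valid and rather illuminating consistency check on the stated $\beta$.

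The genuine gap is the $O(\log c')$ bound on the $\ell\neq 0$ contribution, which is the entire analytic content of the theorem, and your proposed fallback for the hard regime would fail. Note that here $a\equiv 1\pmod{c'}$, so $a/c'$ reduces to $1/c'=[0;c']$ and $D(a,c')=c'$; consequently any termwise estimate of Korobov type, i.e.\ ``using $\gcd(a,c)=1$ to control the small denominators $\|aj/c\|$,'' is governed by \eqref{Korobov2} and can only yield $O(c'\log^2 c')$, which merely reproduces Theorem \ref{bound of Dedekind sum with partial fractions} rather than the claimed error term. Concretely, after removing the $\ell=0$ term your cotangent expansion still contains near-singular families indexed by $k=jc'+j'$ with $j=\ell\neq 0$ and $j'$ small, where the summand has size about $1/(|k|\,|j'|)$; bounding these in absolute value gives a divergent-in-$j$ sum, so one must exploit the oscillation of $\chi_1(jc'+j')\chi_2(j+nk)$ in these families (and justify the interchange of the two conditionally convergent Fourier series, e.g.\ by symmetric or smoothed truncation). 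None of this is carried out, and the region $|k|>c'$ is not addressed at all beyond the (circular) fallback. The approach is salvageable, but as written the error-term step is missing precisely where the difficulty is concentrated; the paper's formulation avoids this entirely because the $q$-expansion at height $1/c$ converges absolutely and the sole near-singularity per evaluation point is the one producing the main term.
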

For ``typical" values of $n$, we should expect that $\beta \neq 0$, in which case $|S_{\chi_1, \chi_2}(a,c)| \gg c'$.  
For example, suppose that $q_1 = q_2=q$.  In this case, $(c')^2 \equiv 0 \pmod{c}$, since 
$(c')^2 = c \frac{c}{q^2}$, and $q^2|c$.
Therefore, $m \equiv n \pmod{q}$, and $\chi_2(d) = 1$.  Hence, in this case,
\begin{equation}
\beta(\chi_1, \chi_2, n, n) = \chi_2(n)
\frac{\tau(\overline{\chi_1}) \tau(\overline{\chi_2})}{2 \pi^2} L(2, \chi_1 \chi_2).
\end{equation}
Suppose in addition that $\chi_2 = \overline{\chi_1}$;  then
$\tau(\overline{\chi_1}) \tau(\overline{\chi_2}) = q \chi_2(-1)$, and
$L(2, \chi_1 \chi_2) = \zeta(2) \prod_{p|q} (1-p^{-2})$.  Therefore
\begin{equation}
\beta(\overline{\chi_2}, \chi_2, n, n)
= \chi_2(-n) \frac{q}{12} \prod_{p|q} (1-p^{-2}).
\end{equation}
Hence letting $r = \prod_{p|q} p^2$ we  deduce
\begin{equation}
S_{\overline{\chi_2}, \chi_2}(1+n \frac{c}{q},c) = 
\chi_2(-n) \frac{c}{12 r} \prod_{p|q} (p^2 - 1) + O(\log c).
\end{equation}
If $p>3$, then $p^2 - 1 \equiv 0 \pmod{12}$, so this constant is an integer multiple of $\chi_2(n)$ provided $q$ has a prime factor coprime to $6$.

Also, as a safety check, when $q=p$ prime and $\chi_1 = \chi_2$ is the Legendre symbol, we recover a result consistent with Theorem \ref{DW result} (which is stronger of course since it has no error term).

Another comment is that in case $m=n=0$ then $\beta(\chi_1, \chi_2, 0,0) = 0$.  This is consistent with 
\cite[Corollary 2.7]{NRY} which says $S_{\chi_1, \chi_2}(1,c) = 0$.

\subsection{Connections and future directions}
Up to a normalizing factor, the newform Dedekind sum can be realized as the modular symbol associated to a weight $2$ Eisenstein series.  Specifically, let
\begin{equation}
E_{\chi_1, \chi_2}(z) = 2 \sum_{n=1}^{\infty} n^{1/2} \lambda_{\chi_1,\chi_2}(n) e(nz),
\quad \lambda_{\chi_1, \chi_2}(n) = 
\sum_{ab=n} \chi_1(a) \overline{\chi_2}(b) \Big(\frac{b}{a}\Big)^{1/2},
\end{equation}
which is a holomorphic weight $2$ Eisenstein series on $\Gamma_0(q_1 q_2)$ with central character $\chi_1 \overline{\chi_2}$.
For $\gamma \in \Gamma_0(q_1 q_2)$, 
\begin{equation}
\int_{i \infty}^{\gamma(\infty)} E_{\chi_1, \chi_2}(z) dz = \tau(\overline{\chi_1}) S_{\chi_1, \chi_2}(\gamma),
\end{equation}
which indeed is a modular symbol.
See \cite[Section 5]{Stucker2020} for more details.

There are a large number of works on the distribution of values of modular symbols.
For instance, see 
\cite{PetridisRisager, addtwists, Nordentoft, BettinDrappeau, Cowan, lee2023dynamics} for some recent advances in the subject, which largely (if not entirely) restrict attention to modular symbols for cusp forms.  The case of Eisenstein series considered in this article has some familiar extra difficulties due to the lack of vanishing at all cusps.
It could be valuable to investigate possible extensions of these methods and results to the case of Eisenstein series.

\section{Continued Fractions}
We introduce a discussion on properties of continued fractions as they are a crucial part of the proofs of our results.
Let $\frac{c}{d}$ be a rational number with $d \geq 1$ and $\gcd(c,d)=1$. The simple finite continued fraction expansion of $\frac{c}{d}$ is 
$$\frac{c}{d}=
a_0+\cfrac{1}{a_1+\cfrac{1}{a_2+\cfrac{1}{\ddots+\cfrac{1}{a_n }
  }}}$$
  where 
$a_0 \in \mathbb{Z}$,  
  $a_i \in \mathbb{Z}^+$ for $i \geq 1$, and $n\in \mathbb{Z}^{+}$. 
It is traditional to write this more compactly as $[a_0;a_1, \dots, a_n]$.   
  The partial quotients are the $a_i \in \mathbb{Z}^+$ where $1\leq i \leq n$.
Each rational number has precisely two continued fraction expansions, via 
\begin{equation}
\label{eq:continuedfractionmultiplicity}
[a_0;a_1, \dots, a_{k-1}, a_k, 1] = [a_0;a_1, \dots, a_{k-1}, a_k + 1].
\end{equation}
Note that exactly one of these two expansions has an even number of terms.
  The maximal partial quotient is defined by $D(a,b)= \max\{a_i: 1\leq i\leq n\}$, where in light of 
the potential ambiguity arising from  
  \eqref{eq:continuedfractionmultiplicity}  we assume that $a_n \geq 2$. 
  Following \cite{Hensley1991}, for $C\in \mathbb{Z}^+$ and $\alpha>0$, define $$ \Phi(\alpha,C)=\#\{(a,c): 1<a<c\leq C, \gcd(a,c)=1, D(a,c)\leq \alpha \log C\}.$$  
  \begin{mytheo}[Hensley]\label{Hensley}
  We have
      \begin{equation}
      \Phi(\alpha,C)=\frac{3}{\pi^2}C^2 \exp\left(\frac{-12}{\alpha \pi^2}\right) \left(1+ O\left (\left ( \alpha^{-2}+1\right)\exp\Big(\frac{24}{\alpha \pi^2}\Big) \frac{\log \log C}{\log C} \right )\right),
       \end{equation} 
       uniformly in $\alpha > \frac{4}{ \log\log C}$ as $C \rightarrow \infty$. 
  \end{mytheo}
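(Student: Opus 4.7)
The plan is to derive the estimate through the ergodic and spectral theory of the Gauss map $T(x)=\{1/x\}$ on $[0,1)$, whose iterates produce the continued fraction expansion; this is the route taken in Hensley's original paper. The key objects are the invariant probability measure $d\mu=(\log 2)^{-1}(1+x)^{-1}dx$ and the associated Gauss--Kuzmin--Wirsing transfer operator $\mathcal{L}f(x)=\sum_{k\geq 1}(k+x)^{-2}f(1/(k+x))$, which on a suitable space of holomorphic functions on a disk containing $[0,1]$ is compact, has simple dominant eigenvalue $1$, and has a spectral gap.

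First I would rewrite $\Phi(\alpha,C)$ as a sum over finite sequences $(a_1,\dots,a_n)$ of positive integers bounded by $K:=\alpha\log C$, weighted by the restriction that the continuant denominator $q_n$ is at most $C$. The bijection between coprime pairs $(a,c)$ and such sequences (up to the ambiguity in \eqref{eq:continuedfractionmultiplicity}) converts the problem to counting admissible sequences. Standard generating-function identities express this weighted count through the resolvent of the truncated transfer operator
\begin{equation*}
\mathcal{L}_K f(x)=\sum_{k=1}^K \frac{1}{(k+x)^2}f\!\left(\frac{1}{k+x}\right),
\end{equation*}
and a Tauberian or Mellin-style inversion reduces the asymptotics of $\Phi(\alpha,C)$ to the leading eigenvalue $\lambda_K$ of $\mathcal{L}_K$ together with a quantitative spectral gap.

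Next I would use perturbation theory on $\mathcal{L}-\mathcal{L}_K$, combined with the tail computation $\int_0^1 \sum_{k>K}(k+x)^{-2}(1+x)^{-1}\,dx=\frac{12}{\pi^2 K}+O(K^{-2})$, to establish $\lambda_K=1-\frac{12}{\pi^2 K}+O(K^{-2})$. By L\'evy's theorem the typical continued-fraction length for $c\leq C$ is $n_0=\frac{12\log 2}{\pi^2}\log C$, so the main term takes the form
\begin{equation*}
\lambda_K^{n_0}\cdot \tfrac{3}{\pi^2}C^2=\exp\!\Big(-\tfrac{12}{\pi^2\alpha}\Big)\cdot\tfrac{3}{\pi^2}C^2\cdot\bigl(1+\text{errors}\bigr),
\end{equation*}
which reproduces the announced leading order once the contributions from atypical lengths $n$ are controlled by the spectral gap.

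The hard part is quantifying two independent sources of error uniformly in $\alpha>4/\log\log C$. The first is the $O(K^{-2})$ correction to $\lambda_K$, which, after exponentiation over $n_0$ iterations, contributes a factor of order $(\alpha^{-2}+1)\exp(24/(\pi^2\alpha))$ through a second-order Taylor expansion of $\lambda_K^{n_0}$; the doubled exponent $24/(\pi^2\alpha)$ is the expected signature of a variance-type term in a Poisson approximation, where the second moment dominates the correction. The second is the large-deviation behavior of the continuant length around $n_0$, which must be handled by a quantitative version of L\'evy's theorem with an explicit spectral-gap-driven rate, producing the factor $\log\log C/\log C$ through the deviation scale. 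The principal obstacle is establishing a uniform spectral gap for $\mathcal{L}_K$ that persists down to $K\sim\log C/\log\log C$ and combining it with the length large-deviation estimate in a way that tracks the two exponential factors simultaneously, rather than losing them to crude majorants.
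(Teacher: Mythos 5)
The paper does not prove this statement at all: it is quoted from Hensley's 1991 paper \cite{Hensley1991} and used as a black box, so there is no internal proof to compare your attempt against. On its own terms, your sketch correctly identifies the strategy of Hensley's original argument --- encode $\Phi(\alpha,C)$ as a count of digit strings with all partial quotients at most $K=\alpha\log C$ and continuant denominator at most $C$, then analyze this count via the truncated Gauss--Kuzmin--Wirsing operator $\mathcal{L}_K$, its leading eigenvalue, and a spectral gap. But it is a roadmap rather than a proof, and it contains a numerical slip. The tail integral $\int_0^1\sum_{k>K}(k+x)^{-2}(1+x)^{-1}\,dx$ equals $(\log 2)/K+O(K^{-2})$, not $12/(\pi^2K)+O(K^{-2})$; correspondingly, first-order perturbation theory (Lebesgue measure is the left eigenvector, the Gauss density the right one, and $\sum_{k>K}\tfrac{1}{(k+x)(k+x+1)}=\tfrac{1}{K+1+x}$ telescopes) gives $\lambda_K=1-\tfrac{1}{K\log 2}+O(K^{-2})$. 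With the values you state, $\lambda_K^{n_0}=\exp(-\tfrac{144\log 2}{\pi^4\alpha}+\cdots)$, and $\tfrac{144\log 2}{\pi^4}\approx 1.025\neq\tfrac{12}{\pi^2}\approx 1.216$, so your main term does not actually reproduce the constant in the exponent. The correct bookkeeping is $\lambda_K^{n_0}=\exp\bigl(-\tfrac{1}{K\log 2}\cdot\tfrac{12\log 2}{\pi^2}\log C\bigr)=\exp\bigl(-\tfrac{12}{\pi^2\alpha}\bigr)$.

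More seriously, everything you defer to ``the hard part'' is where the content of the theorem lives: (i) a spectral gap for $\mathcal{L}_K$ on a fixed function space with constants uniform as $K$ ranges down to roughly $4\log C/\log\log C$; (ii) a quantitative local-limit or large-deviation statement for the length $n$ of the expansion of $a/c$ over $c\le C$, sharp enough that atypical lengths contribute only to the stated error; and (iii) the second-order expansion of $\lambda_K^{\,n}$ producing the factor $(\alpha^{-2}+1)\exp(24/(\pi^2\alpha))$. None of these is carried out or even reduced to a precise lemma, so the proposal does not establish the theorem. Since the paper itself only needs the statement, the appropriate course is to cite \cite{Hensley1991} as the authors do, or, if a self-contained proof is truly wanted, to supply the three missing ingredients above with explicit uniformity in $\alpha$.
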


Hensley's result gives the frequency of small partial quotients in the continued fraction expansion of rationals in the set $\{(a,c): 1<a<c\leq C, \gcd(a,c)=1\}$. We can re-frame this result to bound the frequency of large partial quotients instead. Let 
\begin{equation}
\label{eq:GalphaCdef}
 G(\alpha,C)=\#\{(a,c): 1<a<c\leq C, \gcd(a,c)=1, D(a,c)>\alpha \log C\}.
 \end{equation}
\begin{mycoro}\label{large partial quotient}
   We have
      \begin{equation}G(\alpha,C)\ll  \frac{C^2}{\alpha} + C^2 \frac{\log \log C}{\log C}
       \end{equation} 
       uniformly in $\alpha > 1$ as $C \rightarrow \infty$. 
\end{mycoro}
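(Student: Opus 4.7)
The plan is to deduce the corollary from Hensley's asymptotic (Theorem \ref{Hensley}) by simple complementation. Let $T(C)$ denote the total count
$$T(C) = \#\{(a,c) : 1 < a < c \leq C, \gcd(a,c) = 1\} = \sum_{c=2}^{C}(\phi(c) - 1) = \frac{3}{\pi^2} C^2 + O(C \log C),$$
where the asymptotic is the classical estimate for $\sum \phi(c)$. The two sets defining $\Phi(\alpha, C)$ and $G(\alpha,C)$ partition the set counted by $T(C)$, so
$$G(\alpha, C) = T(C) - \Phi(\alpha, C).$$

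Substituting Hensley's formula and expanding gives
$$G(\alpha, C) = \frac{3}{\pi^2} C^2 \bigl(1 - e^{-12/(\alpha \pi^2)}\bigr) + O\!\left( C^2 (\alpha^{-2} + 1) e^{12/(\alpha \pi^2)} \frac{\log \log C}{\log C}\right) + O(C \log C).$$
For the first term I would apply the elementary inequality $1 - e^{-x} \leq x$ for $x \geq 0$ with $x = 12/(\alpha\pi^2)$, which yields the bound $\ll C^2/\alpha$. For the error term, the assumption $\alpha > 1$ makes both $(\alpha^{-2}+1)$ and $e^{12/(\alpha\pi^2)}$ bounded by absolute constants, so the contribution is $\ll C^2 \frac{\log \log C}{\log C}$. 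The leftover $O(C \log C)$ is absorbed into this same error.

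A minor bookkeeping matter is that Hensley's asymptotic is only stated for $\alpha > 4/\log \log C$; for $C$ large this is implied by $\alpha > 1$, so uniformity in $\alpha > 1$ is automatic. If $\alpha$ is enormous, say $\alpha \log C > C$, the condition $D(a,c) > \alpha \log C$ forces $D(a,c) > C$, which is impossible since partial quotients of $a/c$ are at most $c \leq C$; in that range $G(\alpha, C) = 0$ and the bound is trivial.

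I do not anticipate any substantive obstacle: the whole argument is a one-line complementation of Hensley combined with the inequality $1 - e^{-x} \leq x$, with a brief verification that the error factor in Hensley remains $O(1)$ throughout the range $\alpha > 1$. The only thing one needs to take care about is not overselling uniformity in $\alpha$ beyond what Hensley supplies, which is handled by the trivial regime for very large $\alpha$.
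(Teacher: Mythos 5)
Your proposal is correct and follows essentially the same route as the paper: complementation against the total count $\sum_{c\leq C}\varphi(c) = \frac{3}{\pi^2}C^2 + O(C\log C)$, substitution of Hensley's asymptotic, and the inequality $1-e^{-x}\leq x$ (the paper's ``Taylor expansion'') for the main term, with the error factor bounded since $\alpha>1$. The extra remarks about the range $\alpha > 4/\log\log C$ and the trivial regime for enormous $\alpha$ are harmless and only reinforce the uniformity claim.
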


\begin{proof}
    Note that $G(\alpha,C)+\Phi(\alpha,C)=
\sum_{c \leq C} \varphi(c) =     
    \frac{3}{\pi^2}C^2+O( C \log C)$.
    Then 
    \begin{align*}
        G(\alpha,C)&= \frac{3}{\pi^2}C^2-\Phi(\alpha,C)+O(C \log{C})\\
        &= \frac{3}{\pi^2}C^2\left (1-\exp\left(\frac{-12}{\alpha \pi^2}\right)\right)+O\Big(C^2 \frac{\log \log C}{\log C} \Big).
    \end{align*}
The proof is completed with a Taylor expansion.
\end{proof}

For later purposes, we also need the following result on continued fractions.
\begin{mylemma}
\label{lemma:reversal}
Suppose that $(\begin{smallmatrix} a & b \\ c & d \end{smallmatrix}) \in SL_2(\mathbb{Z})$ with $0 < a < c$ and $0 < d < c$.  Say the continued fraction expansion of $a/c$ takes the form $[0;a_1, a_2, \dots, a_n]$ with $n$ odd.  Then the continued fraction expansion of $d/c$ equals $[0;a_n, \dots, a_1]$.
In particular, $D(a,c) = D(d,c)+ \delta$ for some $|\delta| \leq 1$.
\end{mylemma}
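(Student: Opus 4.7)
The plan is to use the standard connection between the inverse of $a$ modulo $c$ and the convergent denominators of the continued fraction of $a/c$, and then exploit the symmetry of the recurrence for these denominators.

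First I would observe that $ad - bc = 1$ forces $a d \equiv 1 \pmod{c}$, so $d$ is characterized as the unique element of $(0,c)$ that is the multiplicative inverse of $a$ modulo $c$. Next, let $p_k/q_k$ denote the convergents of $a/c = [0; a_1, \ldots, a_n]$, with the usual recurrences $p_k = a_k p_{k-1} + p_{k-2}$ and $q_k = a_k q_{k-1} + q_{k-2}$, together with the determinant identity $p_k q_{k-1} - p_{k-1} q_k = (-1)^{k-1}$. Since $a = p_n$ and $c = q_n$, evaluating at $k=n$ (using $n$ odd) gives $a q_{n-1} - c p_{n-1} = 1$, whence $a q_{n-1} \equiv 1 \pmod c$. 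Because $0 < q_{n-1} < q_n = c$, uniqueness gives $d = q_{n-1}$.

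The second step is the reversal identity. Unfolding the recurrence $q_k = a_k q_{k-1} + q_{k-2}$ yields
\begin{equation*}
\frac{q_n}{q_{n-1}} = a_n + \frac{q_{n-2}}{q_{n-1}} = a_n + \cfrac{1}{a_{n-1} + \cfrac{q_{n-3}}{q_{n-2}}} = \cdots = [a_n; a_{n-1}, \ldots, a_1],
\end{equation*}
using $q_0 = 1$, $q_{-1} = 0$ at the bottom of the recursion. Taking the reciprocal and combining with $d = q_{n-1}$ from the previous paragraph gives $d/c = [0; a_n, a_{n-1}, \ldots, a_1]$, as claimed.

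Finally I would address the $|\delta|\leq 1$ claim. The maximal partial quotient $D$ is defined using the expansion whose final entry is $\geq 2$. By that convention, $a_n \geq 2$, so the representation $[0; a_n, \ldots, a_1]$ for $d/c$ is already canonical whenever $a_1 \geq 2$, giving equality $D(a,c) = D(d,c)$. The only subtlety is when $a_1 = 1$: in that case the canonical expansion of $d/c$ is $[0; a_n, \ldots, a_3, a_2 + 1]$ via the identity \eqref{eq:continuedfractionmultiplicity}, and simultaneously the partial quotient $a_1 = 1$ never contributes to $D(a,c)$ since $a_n \geq 2$. Comparing the two maxima $\max\{a_2, \ldots, a_n\}$ and $\max\{a_2 + 1, a_3, \ldots, a_n\}$, the difference is $0$ unless $a_2$ is the unique maximizer in the former, in which case the difference is exactly $1$. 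Either way $|D(a,c) - D(d,c)| \leq 1$. The only mildly delicate step is this last bookkeeping with the canonical-form convention; the rest is the classical convergent machinery.
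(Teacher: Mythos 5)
Your proof of the main reversal identity is correct, and it takes a mildly different route from the paper: the paper invokes van der Poorten's matrix factorization $\left(\begin{smallmatrix} a & B \\ c & D\end{smallmatrix}\right) = \left(\begin{smallmatrix} a_0 & 1 \\ 1 & 0\end{smallmatrix}\right)\cdots\left(\begin{smallmatrix} a_n & 1 \\ 1 & 0\end{smallmatrix}\right)$, identifies $D=d$ by comparing determinants, and obtains the reversal by transposing the product, whereas you work at the level of the scalar recurrences, identifying $d=q_{n-1}$ via $p_nq_{n-1}-p_{n-1}q_n=(-1)^{n-1}$ together with uniqueness of the inverse of $a$ modulo $c$ in $(0,c)$, and then use the classical mirror formula $q_n/q_{n-1}=[a_n;a_{n-1},\dots,a_1]$. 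These are two presentations of the same underlying mechanism (the matrix product encodes the recurrences, and the transpose is the mirror formula), but yours is self-contained and avoids the citation; both are equally valid.

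There is, however, a flaw in your final bookkeeping for $|\delta|\leq 1$. You assert that ``by that convention, $a_n\geq 2$,'' but the expansion $[0;a_1,\dots,a_n]$ in the lemma is normalized by the parity of $n$, not by the size of the last entry, and these two normalizations need not coincide. For example, $2/5=[0;2,2]=[0;2,1,1]$: the odd-length expansion to which the reversal applies is $[0;2,1,1]$ with $a_3=1$, while $D(2,5)$ is computed from $[0;2,2]$. The paper explicitly flags this mismatch (``which may differ from the stated form''). Consequently your case analysis omits the case $a_n=1$, where $D(a,c)$ must be read off from $[0;a_1,\dots,a_{n-2},a_{n-1}+1]$ and the reversed expansion $[0;1,a_{n-1},\dots,a_1]$ carries the entry $a_{n-1}$ rather than $a_{n-1}+1$; one then checks, exactly as in your $a_1=1$ subcase, that the two maxima differ by at most $1$ (and the same care is needed if additionally $a_1=1$). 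The repair is routine and symmetric to what you already wrote, but as stated the argument does not cover all cases.
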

\begin{proof}
We begin by drawing some material from \cite{Poorten}.  The continued fraction expansion $a/c = [a_0;a_1, \dots, a_n]$ corresponds to the matrix factorization
\begin{equation}
\label{eq:matrixfactorization}
\begin{pmatrix} a & B \\ c & D \end{pmatrix}
=
\begin{pmatrix} a_0 & 1 \\ 1 & 0 \end{pmatrix}
\begin{pmatrix} a_1 & 1 \\ 1 & 0 \end{pmatrix}
\dots
\begin{pmatrix} a_n & 1 \\ 1 & 0 \end{pmatrix} \in SL_2(\mathbb{Z}),
\end{equation}
with $B/D = [a_0;a_1, \dots, a_{n-1}]$.
To be precise, one of the two continued fraction expansions of $a/c$ gives rise to this factorization, and the ambiguity is resolved by comparing determinants of both sides, leading to the restriction that $n$ is odd.  We first claim that $B = b$ and $D=d$.  To see this, note that $0 < D < c$ since $B/D$ is a prior convergent of the fraction $a/c$. 
Comparing determinants, we have $d \equiv D \pmod{c}$.
Since $0 < d, D < c$ then $D=d$, and hence $B=b$ as well.

Van der Poorten argues that by taking the transpose of both sides of \eqref{eq:matrixfactorization} we have
\begin{equation}
[a_n;a_{n-1}, \dots, a_0] = \frac{a}{b},
\qquad \text{and}
\qquad
[a_n;a_{n-1}, \dots, a_1] = \frac{c}{d}.
\end{equation}
Taking the reciprocal of the latter equation implies 
$\frac{d}{c} = [0;a_n, a_{n-1}, \dots, a_1]$, as desired.

For the final sentence, we need to observe that $D(a,c)$ was defined using the representation of the continued fraction with final element $a_n \geq 2$, which may differ from the stated form (with $n$ odd).  The same feature can occur in the reversed continued fraction.  Some thought shows that $D(a,c)$ and $D(d,c)$ may differ by at most $1$.
\end{proof}
\begin{mycoro}
\label{coro:digitsymmetry}
Suppose $\gamma=(\begin{smallmatrix}
    a & b\\
    c & d
 \end{smallmatrix}) \in \Gamma_0(q_1q_2)$ and set $c' = c/q_2$.  
 Then $D(a,c') = D(d,c') + \delta$, where $|\delta| \leq 1$.  
\end{mycoro}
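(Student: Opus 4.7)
The plan is to reduce the statement to a direct invocation of Lemma \ref{lemma:reversal}. The given matrix $\gamma$ has entries satisfying $ad \equiv 1 \pmod{c}$, hence $ad \equiv 1 \pmod{c'}$, and moreover $\gcd(a,c') = \gcd(d,c') = 1$ because $c' \mid c$ and $\gcd(a,c) = \gcd(d,c) = 1$. However, the entries $a$ and $d$ of $\gamma$ need not lie in the range $(0,c')$, so Lemma \ref{lemma:reversal} does not apply on the nose.

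To remedy this, I would set up representatives in the correct range. Assume $c' \geq 2$, since otherwise the continued fraction has no partial quotients and the claim is vacuous. Let $\tilde{a}, \tilde{d} \in \{1, 2, \dots, c'-1\}$ denote the reductions of $a, d$ modulo $c'$; both are nonzero by the coprimality observation. Since $\tilde{a}\tilde{d} \equiv 1 \pmod{c'}$, there is an integer $\tilde{b}$ with $\tilde{a}\tilde{d} - \tilde{b}c' = 1$, so
\begin{equation*}
\begin{pmatrix} \tilde{a} & \tilde{b} \\ c' & \tilde{d} \end{pmatrix} \in SL_2(\mathbb{Z})
\end{equation*}
meets the hypotheses of Lemma \ref{lemma:reversal}.

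Next I would invoke the translation invariance of partial quotients: the continued fraction of $a/c'$ differs from that of $\tilde{a}/c'$ only in the integer part $a_0 = \lfloor a/c' \rfloor$, while the subsequent partial quotients $a_1, \dots, a_n$ agree. Hence $D(a,c') = D(\tilde{a},c')$, and similarly $D(d,c') = D(\tilde{d},c')$. Applying Lemma \ref{lemma:reversal} to the matrix above yields $D(\tilde{a},c') = D(\tilde{d},c') + \delta$ for some $|\delta| \leq 1$, which translates immediately to the desired identity $D(a,c') = D(d,c') + \delta$.

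The argument is essentially mechanical; the only points that require a modicum of care are verifying that $\tilde{a}$ and $\tilde{d}$ are nonzero (which rests on $\gcd(a,c)=1$ and $c' \mid c$) and noting the translation invariance of the maximal partial quotient. Both are standard facts, so I do not anticipate any substantive obstacle.
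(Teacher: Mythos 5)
Your proposal is correct and follows essentially the same route as the paper: both reduce $a$ and $d$ modulo $c'$ to representatives in $(0,c')$, observe that the maximal partial quotient $D$ (which ignores $a_0$) is unchanged by this translation, and then apply Lemma \ref{lemma:reversal} to the resulting $SL_2(\mathbb{Z})$ matrix with lower-left entry $c'$. The paper phrases the reduction as left/right multiplication by $T^j$, while you construct the reduced matrix directly from $\tilde{a}\tilde{d}\equiv 1 \pmod{c'}$, but these are the same argument.
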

\begin{proof}
Note that
$\gamma' = (\begin{smallmatrix} a & q_2 b \\ c' & d \end{smallmatrix}) \in SL_2(\mathbb{Z})$.
We can multiply $\gamma'$ on the left or right by matrices of the form $T^j = (\begin{smallmatrix} 1 & j \\ 0 & 1 \end{smallmatrix})$ without changing the largest partial quotient of $a/c'$ or $d/c'$.  By appropriate choices we obtain $\gamma' = T^j (\begin{smallmatrix} a' & * \\ c' & d' \end{smallmatrix}) T^k$, with
$a' \equiv a \pmod{c'}$, $0 < a' < c'$, $d' \equiv d \pmod{c'}$, and $0 < d' < c'$.  Then $D(a,c') = D(a', c') = D(d', c') +\delta = D(d, c') + \delta$, where the middle equation uses Lemma \ref{lemma:reversal}.
\end{proof}

\section{Proof of Theorem \ref{bound of Dedekind sum with partial fractions} }
\subsection{Alternative form of $S_{\chi_1,\chi_2}(a,c)$}
We introduce an alternative formulation of \eqref{newform} which will be used for the remainder of the paper. Define for $z \in \mathbb{H}$ 
\begin{equation} \label{Ffunction}
    f_{\chi_1, \chi_2}(z)=\sum_{l=1}^{\infty}\sum_{k=1}^{\infty} \frac{\chi_1(l)\overline{\chi_2}(k)}{l} e(klz),
\end{equation}
 where $e(klz)=e^{2 \pi i klz}$.
To help orient the reader, we mention that $f$ is an Eichler integral of the holomorphic weight $2$ Eisenstein series attached to the pair of characters $\chi_1$, $\chi_2$.  It can alternatively be realized as the holomorphic part of the constant term at $s=1$ of $E_{\chi_1, \chi_2}(z,s)$.
These facts will not be used directly anywhere in this article.

 Then define  
 \begin{equation} 
 \label{Phi}
    \phi_{\chi_1, \chi_2}(\gamma,z)= f_{\chi_1, \chi_2}(\gamma z)-\psi(\gamma) f_{\chi_1, \chi_2}(z),
\end{equation}
where $\psi=\chi_1\overline{\chi_2}$. Lemma 2.1 in \cite{Stucker2020} gives a straightforward argument showing that $\phi$ is independent of $z$. 
In \cite{Stucker2020}, the authors introduced an alternative formulation of \eqref{newform}:
\begin{mydefi}[SVY2020]
    Let $\chi_1,\chi_2$ be primitive nontrivial Dirichlet characters modulo $q_1,q_2$ respectively with  $\chi_1 \chi_2(-1)=1$. Let $\gamma= \left(\begin{smallmatrix}
    a & b\\
    c & d
    \end{smallmatrix} \right) \in \Gamma_0(q_1q_2)$ with $c\geq 1$. Then 
    \begin{equation} \label{sum1}
    S_{\chi_1,\chi_2} (a,c)= \frac{\tau(\overline{\chi_1})}{\pi i}\phi_{\chi_1,\chi_2}(\gamma)
\end{equation}
where $\tau(\overline{\chi_1})$ is the Gauss sum defined by $\tau(\overline{\chi_1})=\sum_{n \mymod{q_1}} \overline{\chi_1}(n)e_{q_1}(n)$.
\end{mydefi}
A further discussion of the relationship between \eqref{newform} and \eqref{sum1}  is found in \cite{Stucker2020}.  In fact, \eqref{sum1} was the original definition in \cite{Stucker2020}, and \eqref{newform} was deduced only with some work.

It is well-known that $|\tau(\overline{\chi_1})|= \sqrt{q_1}$, and therefore
$    |S_{\chi_1,\chi_2}(a,c)|= \frac{\sqrt{q_1}}{\pi}|\phi_{\chi_1,\chi_2}(\gamma)|$.
Hence our investigation on large values focuses on $\phi_{\chi_1,\chi_2}$.

The idea in estimating $\phi$ comes from choosing $z \in \mathbb{H}$ so that the minimum of the imaginary parts of $z$ and $\gamma z$ is maximized.  By a little computation, this is attained when
$z = -\frac{d}{c} + \frac{i}{c}$, in which case
$\gamma z = \frac{a}{c} + \frac{i}{c}$.

\subsection{Simplified expression of $f_{\chi_1,\chi_2}$}
Here we derive an expression for $f_{\chi_1, \chi_2}$ by evaluating a geometric series.  
\begin{mylemma}\label{lemma simplifed}
Let $c' = \frac{c}{q_2}$, and set $\theta(c',a, l)= e\left( \frac{l(a+i)}{c'} \right )$.
We have
    \begin{equation}\label{simplified f gamma z}
        f_{\chi_1, \chi_2}\Big(\frac{a}{c} + \frac{i}{c}\Big)= \sum_{l=1}^\infty  \frac{\chi_1(l)}{l\left(1-\theta(c',a,l)\right)}\sum_{k_0=1}^{q_2} \overline{\chi_2}(k_0) e\left( \frac{k_0 l (a+i)}{c} \right ).
    \end{equation}
\end{mylemma}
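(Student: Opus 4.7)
The plan is straightforward: substitute $z = a/c + i/c$ into the definition \eqref{Ffunction}, then split the inner summation variable $k$ into residue classes modulo $q_2$ and recognize the resulting geometric series.

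First I would note that when $z = a/c + i/c$ we have $\mathrm{Im}(z) = 1/c > 0$, so $|e(klz)| = e^{-2\pi kl/c}$ decays exponentially in both $k$ and $l$. Hence the double series defining $f_{\chi_1,\chi_2}$ converges absolutely, and every rearrangement below is justified.

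Next, I would write each $k \geq 1$ uniquely as $k = k_0 + j q_2$ with $k_0 \in \{1, 2, \dots, q_2\}$ and $j \in \mathbb{Z}_{\geq 0}$. Since $\overline{\chi_2}$ has period $q_2$ and $\overline{\chi_2}(q_2)=0$, this reindexing yields
\begin{equation*}
\sum_{k=1}^{\infty} \overline{\chi_2}(k)\, e(klz) = \sum_{k_0=1}^{q_2} \overline{\chi_2}(k_0)\, e(k_0 l z) \sum_{j=0}^{\infty} e(j q_2 l z).
\end{equation*}
The inner sum is a geometric series with ratio $e(q_2 l z)$ of modulus $e^{-2\pi q_2 l/c} < 1$, so it equals $1/(1 - e(q_2 l z))$.

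Finally I would specialize $z = a/c + i/c$. Using $c = q_2 c'$, one has $q_2 z = a/c' + i/c'$, so $e(q_2 l z) = \theta(c',a,l)$. Also $e(k_0 l z) = e(k_0 l(a+i)/c)$. Plugging back into \eqref{Ffunction} then yields \eqref{simplified f gamma z}. There is no real obstacle here; the only thing to be careful about is the endpoint $k_0 = q_2$ in the inner sum, which contributes nothing because $\overline{\chi_2}(q_2) = 0$, so one may equivalently sum over $k_0 = 1, \dots, q_2$ or $k_0 = 0, 1, \dots, q_2 - 1$ without affecting the identity.
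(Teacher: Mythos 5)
Your proposal is correct and matches the paper's argument: both split $k$ into residue classes $k=k_0+mq_2$ modulo $q_2$, use the periodicity of $\overline{\chi_2}$, and sum the resulting geometric series with ratio $e(q_2 l z)=\theta(c',a,l)$, which has modulus $e^{-2\pi l/c'}<1$. Your explicit justification of absolute convergence and the remark about the endpoint $k_0=q_2$ are harmless additions to the same proof.
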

\begin{proof}
Note 
\begin{align}\label{f gamma z}
    f_{\chi_1,\chi_2}\left(\frac{a}{c}+\frac{i}{c}\right) &= \sum_{l=1}^{\infty} \sum_{k=1}^{\infty} \frac{\chi_1(l)}{l} \overline{\chi_2}(k) e\left( \frac{kl(a+i)}{c} \right ).
\end{align}
Write $k = k_0 + m q_2$ where $1 \leq k_0 \leq q_2$ and $m \geq 0$.
Then
\begin{equation}
    f_{\chi_1,\chi_2}\left(\frac{a}{c}+\frac{i}{c}\right)=  \sum_{l=1}^{\infty}   \sum_{k_0 =1}^{q_2} \frac{\chi_1(l)}{l} \overline{\chi_2}(k_0) e\left( \frac{k_0 l (a+i)}{c} \right ) \sum_{m=0}^{\infty} \theta(c',a,l)^m.
\end{equation}
Notice $|\theta(c',a,l)|<1$ so the sum over $m$ evaluates in closed form, giving the result.
\end{proof}

\subsection{Bounding $f_{\chi_1,\chi_2}$}
Here we produce a bound on $f$ using Lemma \ref{lemma simplifed}.
As a first step, we have the following.
\begin{mylemma}
\label{f gamma z up to c}
Let $\gamma=\left( \begin{smallmatrix}
    a & b\\
    c & d
\end{smallmatrix}\right) \in \Gamma_0(q_1q_2)$ with $c\geq 1$ and let $c' = \frac{c}{q_2}$. Then 
\begin{equation}
        f_{\chi_1, \chi_2} \left(\frac{a}{c}+\frac{i}{c}\right)= \sum_{l=1}^{c'-1}  \frac{\chi_1(l)}{l\left(1-\theta(c',a,l)\right)}\sum_{k_0=1}^{q_2} \overline{\chi_2}(k_0) e\left( \frac{k_0 l (a+i)}{c} \right ) + O_{q_2}(1).
    \end{equation}
\end{mylemma}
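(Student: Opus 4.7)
The plan is to apply Lemma \ref{lemma simplifed} directly and then show that the tail $\sum_{l \geq c'}$ of the resulting series contributes only $O_{q_2}(1)$. Splitting the infinite sum as $\sum_{l=1}^{c'-1} + \sum_{l=c'}^{\infty}$ reduces the lemma to establishing
$$T := \sum_{l=c'}^\infty \frac{\chi_1(l)}{l(1-\theta(c',a,l))} \sum_{k_0=1}^{q_2} \overline{\chi_2}(k_0) e\!\left(\tfrac{k_0 l(a+i)}{c}\right) = O_{q_2}(1).$$

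The key analytic input is two independent sources of exponential decay. First, since $|\theta(c',a,l)| = e^{-2\pi l/c'}$, in the tail range $l \geq c'$ one has the uniform bound $|1-\theta(c',a,l)|^{-1} \leq (1-e^{-2\pi})^{-1}$, an absolute constant. Second, each term of the inner sum satisfies $|e(k_0 l (a+i)/c)| = e^{-2\pi k_0 l/c}$. Combining these with the trivial character bounds $|\chi_1(l)|, |\overline{\chi_2}(k_0)| \leq 1$ yields
$$|T| \ll \sum_{k_0=1}^{q_2} \sum_{l=c'}^\infty \frac{e^{-2\pi k_0 l/c}}{l}.$$

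For each fixed $k_0 \in \{1, \dots, q_2\}$, I would dominate the inner sum by the integral $\int_{c'-1}^\infty e^{-2\pi k_0 t/c}\, dt/t$; the substitution $u = 2\pi k_0 t/c$ converts this into the exponential integral $E_1\!\left(2\pi k_0(c'-1)/c\right)$. Using $c = q_2 c'$, the argument is bounded below by $2\pi k_0/q_2 - O(1/c')$, which is a positive constant depending only on $q_2$. Hence each $E_1$ value is finite and depends only on $q_2$, and summing over the $q_2$ values of $k_0$ produces the desired $O_{q_2}(1)$ estimate.

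No step of this argument presents a serious obstacle; the only point of care is that the exponential decay in the inner sum operates at rate $e^{-2\pi k_0 l/c}$ (involving $c$, not $c'$), and is usable only because the truncation $l \geq c'$ secures $l/c \geq 1/q_2$. Without this restriction the lower limit of the $E_1$ integral would collapse to $0$, where the integral diverges, and the argument would break down.
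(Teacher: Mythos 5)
Your proposal is correct and follows essentially the same route as the paper: invoke Lemma \ref{lemma simplifed}, observe that $|1-\theta(c',a,l)|^{-1}$ is uniformly bounded once $l \geq c'$, and use the decay $|e(k_0 l(a+i)/c)| = e^{-2\pi k_0 l/c}$, which on the tail range gives a rate controlled only by $q_2$, to bound the remainder by a convergent integral. The only cosmetic difference is that the paper bounds the $k_0$-sum by $q_2$ times its largest term before comparing with a single integral, whereas you keep the sum over $k_0$ and bound each piece by an exponential integral; both yield the same $O_{q_2}(1)$.
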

\begin{proof}
For this, we need to estimate the tail, i.e., the sum over $l \geq c'$ in \eqref{simplified f gamma z}.
Note that $|\theta(c', a, l)| = \exp(- 2\pi l/c') < 1/2$ for $l \geq c'$.  Hence $|1 - \theta(c',a,l)|^{-1} \leq 2$.  
In addition, we have 
$\max_{k_0} |\overline{\chi_2}(k_0) e(k_0 l a/c) 
\exp(- 2\pi k_0 l/c)| \leq \exp(- 2\pi l/c)$.
Hence the contribution of these terms into \eqref{simplified f gamma z} is bounded in absolute value by
\begin{equation}
\sum_{l > c'} \frac{2}{l} q_2 \exp(- 2 \pi l/c) 
\leq 2q_2 \int_{c'}^{\infty} \frac{1}{t} \exp\Big(- \frac{2 \pi t}{q_2 c'}\Big) dt
\leq 2q_2 \log(e q_2). 
\qedhere
\end{equation}
\end{proof}

To go further with Lemma \ref{f gamma z up to c}, we will need \cite[Lemma 3]{Korobov1992}: 
\begin{mylemma}\label{Korobov}
Let $q$ be a positive integer, $1 \leq a < q$, and $\gcd(a, q) = 1$. Then
\begin{equation}\label{Korobov1}
    \sum_{1\leq l\leq q-1}\frac{1}{\|la/q\|}\leq 2 q \log q,
\end{equation}
and
\begin{equation}\label{Korobov2}
    \sum_{1\leq l\leq q-1}\frac{1}{l\|la/q\|}\leq 18 D(a,q) \log^2 q.
\end{equation}
\end{mylemma}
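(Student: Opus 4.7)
The first bound is elementary. Since $\gcd(a,q)=1$, the map $l \mapsto la \bmod q$ permutes $\{1,\ldots,q-1\}$; writing $m = la \bmod q$, one has $\|la/q\| = \min(m,q-m)/q$, so
\begin{equation*}
\sum_{l=1}^{q-1} \frac{1}{\|la/q\|} = q \sum_{m=1}^{q-1} \frac{1}{\min(m,q-m)} \leq 2q \sum_{m=1}^{\lfloor q/2 \rfloor} \frac{1}{m} \leq 2q \log q.
\end{equation*}
No continued fraction information enters here.

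The second bound is harder because the extra $1/l$ weight breaks the bijection symmetry. The plan is to reduce it to a uniform estimate on the partial sums $T(L) := \sum_{l=1}^L 1/\|la/q\|$ and then exploit the continued fraction structure. Abel summation gives
\begin{equation*}
\sum_{l=1}^{q-1} \frac{1}{l\,\|la/q\|} = \frac{T(q-1)}{q-1} + \sum_{L=1}^{q-2} \frac{T(L)}{L(L+1)} \ll \sum_{L=1}^{q-1} \frac{T(L)}{L^2},
\end{equation*}
so it suffices to show $T(L) \ll L\,D(a,q)\log q$ uniformly in $L \leq q$; summing $1/L$ then produces the claimed $O(D\log^2 q)$ bound.

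To establish the uniform bound on $T(L)$, I would use the continued fraction expansion $a/q = [0;a_1,\ldots,a_n]$ with convergents $p_k/q_k$ (so $q_0=1$, $q_n=q$, and each $a_k \leq D$). Fix $k$ with $q_k \leq L < q_{k+1}$. The best-approximation property of convergents yields $\eta_L := \min_{1\leq l\leq L}\|la/q\| = \|q_k a/q\| \geq 1/(q_k + q_{k+1}) \geq 1/(2q_{k+1})$. Because pairwise differences of the points $x_l := la/q \bmod 1$ with $1\leq l\leq L$ are themselves of the form $l'a/q \bmod 1$ with $1\leq |l'|\leq L-1$, the $L$ points are separated from each other and from $0$ on the circle by at least $\eta_L$. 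Ordering them by angular position $0 < y_1 < \cdots < y_L < 1$ gives $y_j \geq j\eta_L$ and $1-y_j \geq (L-j+1)\eta_L$, whence
\begin{equation*}
T(L) = \sum_{j=1}^L \frac{1}{\min(y_j,\,1-y_j)} \leq \frac{2}{\eta_L}\sum_{j=1}^L \frac{1}{j} \ll q_{k+1} \log q.
\end{equation*}
The recursion $q_{k+1} = a_{k+1} q_k + q_{k-1} \leq (D+1)q_k \leq (D+1)L$ then yields $T(L) \ll D\,L\log q$.

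The main technical work will be in verifying the separation estimates carefully, and in particular confirming the best-approximation lower bound $\|q_k a/q\| \geq 1/(q_k + q_{k+1})$ with the correct constants. The factor $D(a,q)$ enters precisely through the inequality $q_{k+1}/q_k \leq D+1$: large partial quotients are what permit the orbit $\{la/q\}_{l\leq L}$ to approach $0$ unusually closely, inflating the sum in just the way the bound reflects.
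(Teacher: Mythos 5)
The paper does not prove this lemma at all: it is quoted verbatim as Lemma 3 of \cite{Korobov1992}, so there is no internal proof to compare against. Your argument is a correct, self-contained proof of both inequalities up to the values of the explicit constants, and it follows the standard route for sums of this type (very likely close to Korobov's own): for \eqref{Korobov1}, the bijectivity of $l \mapsto la \bmod q$; for \eqref{Korobov2}, partial summation against $T(L) = \sum_{l \leq L} \|la/q\|^{-1}$, combined with the fact that the points $\{la/q\}$, $1 \leq l \leq L$, are $\eta_L$-separated from each other and from $0$, where $\eta_L = \min_{1 \leq l \leq L} \|la/q\| = \|q_k a/q\| \geq (q_k + q_{k+1})^{-1}$ by the best-approximation property of convergents. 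The chain $q_{k+1} \leq (D+1) q_k \leq (D+1) L$ is exactly where $D(a,q)$ must enter, and your ordering bounds $y_j \geq j \eta_L$ and $1 - y_j \geq (L-j+1)\eta_L$ are sound, giving $T(L) \leq 2 H_L / \eta_L \ll D L \log q$ as claimed.

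Two small caveats. First, the last step of your proof of \eqref{Korobov1}, namely $2q \sum_{m \leq q/2} 1/m \leq 2q\log q$, requires $H_{\lfloor q/2 \rfloor} \leq \log q$; this is true because $\gamma < \log 2$, but it does not follow from the crude bound $H_N \leq 1 + \log N$, and the small values of $q$ need to be checked separately. Second, your argument for \eqref{Korobov2} produces an unspecified implied constant rather than the stated $18$; tracking your estimates gives roughly $4(D+1)(1+\log q)^2$, which is of the right order but would need care to land at $18\, D(a,q) \log^2 q$ for all $q$. Since the paper only ever uses the lemma through $\ll_{q_1,q_2}$ bounds (in Proposition~\ref{bounding f gamma z with partial quotient} and hence Theorems~\ref{bound of Dedekind sum with partial fractions} and~\ref{thm1}), neither point affects the application.
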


Now we are equipped to prove the following bound on $f$.
 \begin{myprop}\label{bounding f gamma z with partial quotient}
Let $c' = \frac{c}{q_2}$.  Then    
     $$\left|f_{\chi_1,\chi_2}\left(\frac{a}{c}+\frac{i}{c}\right)\right|\ll_{q_1, q_2} D(a,c') \cdot \log^2 c'.$$
 \end{myprop}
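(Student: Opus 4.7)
The plan is to begin from Lemma \ref{f gamma z up to c}, which writes
$$f_{\chi_1,\chi_2}\Big(\frac{a}{c}+\frac{i}{c}\Big) = \sum_{l=1}^{c'-1} \frac{\chi_1(l)}{l(1-\theta(c',a,l))} \sum_{k_0=1}^{q_2} \overline{\chi_2}(k_0) e\Big(\frac{k_0 l (a+i)}{c}\Big) + O_{q_2}(1).$$
Since each summand in the inner sum over $k_0$ has modulus at most $1$, that sum is bounded trivially by $q_2$, and the task reduces to estimating $\sum_{l=1}^{c'-1} \frac{1}{l|1-\theta(c',a,l)|}$.

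The next step is to produce a clean lower bound for $|1 - \theta(c',a,l)|$ in terms of $\|la/c'\|$, the distance from $la/c'$ to the nearest integer. Writing $\theta(c',a,l) = e^{-u}e^{iv}$ with $u = 2\pi l/c'$ and $v = 2\pi l a/c'$, a direct expansion using $1-\cos v = 2\sin^2(v/2)$ gives
$$|1 - \theta(c',a,l)|^2 = (1-e^{-u})^2 + 4 e^{-u}\sin^2(v/2).$$
Discarding the first term, using the elementary inequality $|\sin(\pi t)| \geq 2\|t\|$ (valid for all real $t$) to obtain $\sin^2(v/2) \geq 4\|la/c'\|^2$, and noting that $e^{-u} \geq e^{-2\pi}$ for $1 \leq l \leq c'-1$, I would conclude
$$|1-\theta(c',a,l)| \geq 4 e^{-\pi}\|la/c'\|.$$
Here $\gcd(a,c')=1$ because $\gcd(a,c)=1$ and $c' \mid c$, so $\|la/c'\| \geq 1/c' > 0$ throughout this range.

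Finally, I would invoke Korobov's bound \eqref{Korobov2}. Since $\|la/c'\|$ and $D(a,c')$ depend only on $a \pmod{c'}$, one may reduce $a$ modulo $c'$ to satisfy the hypothesis $1\leq a<c'$ of Lemma \ref{Korobov} without changing either quantity. The lemma then yields
$$\sum_{l=1}^{c'-1}\frac{1}{l\,|1-\theta(c',a,l)|} \leq \frac{e^\pi}{4}\sum_{l=1}^{c'-1}\frac{1}{l\,\|la/c'\|} \ll D(a,c')\log^2 c'.$$
Multiplying by the factor $q_2$ from the inner sum and absorbing the $O_{q_2}(1)$ tail yields the claimed bound. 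I do not foresee a serious obstacle: Korobov's inequality is cited, and the only mildly delicate step is the lower bound for $|1-\theta(c',a,l)|$, which is a routine separation of the radial and angular contributions to $\theta$ followed by the standard sine estimate.
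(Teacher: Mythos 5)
Your proposal is correct and follows essentially the same route as the paper: start from Lemma \ref{f gamma z up to c}, bound the $k_0$-sum trivially by $q_2$, lower-bound $|1-\theta(c',a,l)|$ by a constant multiple of $\|la/c'\|$, and conclude with Korobov's inequality \eqref{Korobov2}. The only difference is the elementary derivation of the lower bound (you expand $|1-\theta|^2$ directly and use $|\sin(\pi t)|\geq 2\|t\|$, while the paper factors through $|1-e^{i\phi}|\leq 2|1-re^{i\phi}|$), which yields the same conclusion up to constants.
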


\begin{proof}
From Lemma \ref{f gamma z up to c} and the triangle inequality, we deduce
\begin{align}
    \left | f_{\chi_1,\chi_2}\left(\frac{a}{c}+\frac{i}{c}\right)\right| &\leq
    q_2 \sum_{1 \leq l \leq c'-1}   \frac{|\chi_1(l)|}{l\left |1-\theta(c',a,l)\right|}  + O(1).
\end{align}
Further, it holds that $|1-e^{i\phi}|\leq 2 | 1-r e^{i\phi}|$
for all $0\leq r\leq 1$ and $\phi\in \mathbb{R}$. This can be proved using elementary calculus or trigonometry techniques and is left as an exercise for the reader.  Similarly, it is an exercise to check that $|1-e(x)|^{-1} \leq 4^{-1} \| x\|^{-1}$ for all $x \in \mathbb{R}$.  
Then \begin{equation*}
     |1-\theta(c',a,l)|^{-1} \leq 2\cdot |1-e(la/c')|^{-1} \leq 2^{-1} \|l a /c' \|^{-1}.
\end{equation*}
Substituting, this shows
\begin{equation}
\left| f_{\chi_1,\chi_2}\left(\frac{a}{c}+\frac{i}{c}\right)\right|  \leq 
    \frac{q_2}{2} \sum_{1 \leq l \leq c'-1} \frac{1}{l \|a l/c' \|} + O(1). 
\end{equation}
Applying \eqref{Korobov2} completes the proof.
\end{proof}

 \subsection{Proof of Theorem \ref{bound of Dedekind sum with partial fractions}}
 Proposition \ref{bounding f gamma z with partial quotient} showed
$ \left| f_{\chi_1,\chi_2}\left(\frac{a}{c}+\frac{i}{c}\right)\right| \ll D(a,c') \log^2 c'$.
By symmetry, we have 
$    \left | f_{\chi_1,\chi_2}\left(\frac{-d}{c}+\frac{i}{c}\right)\right|\ll D(d,c')\log^2 c'$.
Using Corollary \ref{coro:digitsymmetry}, \eqref{Phi} and \eqref{sum1} completes the proof.

\section{Proof of Theorem \ref{thm1}}
\begin{proof}
Recall $Q$ is a constant such that $|S_{\chi_1, \chi_2}(a,c)| \leq Q \cdot D(a,c') \log^2 c'$, as in Theorem \ref{bound of Dedekind sum with partial fractions}.
Also recall the definitions \eqref{eq:FalphaCdef} and \eqref{eq:GalphaCdef}.  
Then we have
\begin{align*}
F(Q \alpha, C) &= 
\# \{ (a,c): 1 \leq a < c \leq C, \thinspace q_1 q_2 | c, \thinspace (a,c) = 1, \thinspace |S_{\chi_1, \chi_2}(a,c)| \geq Q \alpha \log^3 C \} 
\\
&\leq 
\# \{ (a,c): 1 \leq a < c \leq C, \thinspace q_1 q_2 | c, \thinspace (a,c) = 1, \thinspace D(a,c') \geq  \alpha \log C \} 
\\
& \leq 
q_2 \cdot \# \{(a, c'): 1 \leq a < c' \leq \frac{C}{q_2}, \thinspace q_1 | c', \thinspace (a,c') = 1, \thinspace D(a,c') \geq \alpha \log C \}
\\
& \leq q_2 \cdot G\Big(\alpha, \frac{C}{q_2}\Big),
\end{align*}
where in the third line we used that $a$ runs over $q_2$ intervals modulo $c'$, and in the fourth line we dropped the condition $q_1 | c'$ by overcounting.  The proof is completed using Corollary \ref{large partial quotient}, and by replacing $\alpha$ by $\alpha/Q$.
\end{proof}

\section{Proof of Theorem \ref{large value}} 
Our proof rests on the following approximation.
\begin{myprop}
\label{prop:fapprox}
Let $\gamma= \left(\begin{smallmatrix}
         a & b\\
         c & d
         \end{smallmatrix}\right ) \in \Gamma_0(q_1q_2)$,
$c' = \frac{c}{q_2}$, and suppose $a = \delta + nc'$, for $\delta \in \{ \pm 1 \}$.  Then
\begin{equation}
f_{\chi_1, \chi_2}\Big(\frac{a}{c} + \frac{i}{c}\Big)
= c' \frac{1+\delta i}{4 \pi} \tau(\overline{\chi_2}) L(2, \chi_1 \chi_2) \chi_2(n) 
+ O_{q_1,q_2}(\log{c'}).
\end{equation}         
\end{myprop}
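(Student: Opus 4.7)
The plan is to begin with Lemma~\ref{f gamma z up to c}, which truncates the defining sum to $1 \leq l \leq c'-1$ at the cost of an $O_{q_2}(1)$ error. Writing $u = 2\pi l/c'$, the hypothesis $a = \delta + nc'$ forces $la/c' \equiv l\delta/c' \pmod{1}$, so $\theta(c',a,l) = e^{iu\delta - u}$ depends on $\delta$ but not on $n$.

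Next I would establish a uniform approximation
\begin{equation*}
\frac{1}{1 - \theta(c',a,l)} = \frac{(1+i\delta)c'}{4\pi l} + R_1(l), \qquad R_1(l) = O(1),
\end{equation*}
valid for $1 \leq l \leq c'-1$. For small $u$, one expands $1 - e^{iu\delta - u} = u(1-i\delta) + O(u^2)$ and uses the identity $1/(1-i\delta) = (1+i\delta)/2$ coming from $\delta^2 = 1$. For $u$ bounded below, both summands of the approximation are individually $O(1)$, so the bound is automatic. In parallel, I would expand the inner sum
\begin{equation*}
T(l) := \sum_{k_0=1}^{q_2} \overline{\chi_2}(k_0) \, e\Big(\frac{k_0 l (a+i)}{c}\Big)
\end{equation*}
by factoring the exponential as $e(k_0 l n/q_2) \cdot e(k_0 l(\delta+i)/(q_2 c'))$ and estimating the second factor by $1 + O_{q_2}(l/c')$, uniformly in $k_0$ since $k_0/q_2 \leq 1$. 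The standard Gauss sum identity for the primitive character $\chi_2$ then yields
\begin{equation*}
T(l) = \chi_2(l)\chi_2(n)\,\tau(\overline{\chi_2}) + R_2(l), \qquad R_2(l) = O_{q_2}(l/c').
\end{equation*}

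Substituting both approximations into the truncated sum from Lemma~\ref{f gamma z up to c} and multiplying out produces the principal term
\begin{equation*}
\frac{(1+i\delta)\, c'\, \chi_2(n)\, \tau(\overline{\chi_2})}{4\pi} \sum_{l=1}^{c'-1} \frac{\chi_1(l)\chi_2(l)}{l^2},
\end{equation*}
which gives the asserted main term once the sum is completed to infinity, since the tail is $O(1/c')$ and contributes $O(1)$ after multiplication by $c'$. The three cross-terms are easy to control: the ``main-times-$R_2$'' contribution is bounded by $\sum_l (c'/l^2)\cdot O(l/c') = O(\log c')$; the ``$R_1$-times-main'' contribution is bounded by $\sum_l (1/l) \cdot O_{q_2}(1) = O_{q_2}(\log c')$; and the $R_1 \cdot R_2$ contribution is bounded by $\sum_l (1/l)\cdot O_{q_2}(l/c') = O_{q_2}(1)$.

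The main obstacle is establishing the uniform $O(1)$ bound on $R_1(l)$ across the entire range $1 \leq l \leq c'-1$: the Taylor expansion is only sharp when $u \ll 1$, and a naive error bound deteriorates as $l$ approaches $c'$. This is handled by a case split between the small-$l$ regime, where the Taylor expansion gives $R_1(l) = O(u) = O(1)$ directly, and the regime $l \asymp c'$, where $|1 - \theta|$ is bounded away from $0$ so both sides of the approximation are separately $O(1)$.
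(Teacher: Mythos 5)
Your proposal is correct and follows essentially the same route as the paper's proof: truncate via Lemma \ref{f gamma z up to c}, Taylor-expand $(1-\theta)^{-1}$ to extract the factor $\frac{(1+i\delta)c'}{4\pi l}$, evaluate the $k_0$-sum as a Gauss sum up to $O_{q_2}(l/c')$, and complete the resulting $L$-series. Your explicit case split justifying the uniform $O(1)$ bound on $R_1(l)$ over the full range $1\leq l\leq c'-1$ is a welcome extra level of care that the paper leaves implicit.
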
 
\begin{proof}
We continue from Lemma \ref{f gamma z up to c}, and begin by substituting $a = \delta + nc'$, giving
that 
\begin{equation}
f_{\chi_1, \chi_2}\Big(\frac{a}{c} + \frac{i}{c}\Big) = 
\sum_{l=1}^{c'-1}  \frac{\chi_1(l)}{l(1- e(\frac{l (\delta+i)}{c'}))}
\sum_{k_0=1}^{q_2} \overline{\chi_2}(k_0) 
e\Big(\frac{k_0 l n}{q_2}\Big)
e\Big( \frac{k_0 l (\delta+i)}{c} \Big) + O(1).
\end{equation}
By a Taylor series, we have for $l \leq c'$ that
\begin{equation}
\Big(1 - e\Big(\frac{l (\delta+i)}{c'}\Big)\Big)^{-1} = 
\frac{1+i\delta }{4 \pi} \frac{c'}{l} \Big(1+O\Big(\frac{l}{c'}\Big)\Big).
\end{equation}
Similarly,
\begin{equation}
e\Big( \frac{k_0 l (\delta+i)}{c} \Big) = 1  + O\Big(\frac{l}{c'}\Big).
\end{equation}
Hence the inner sum over $k_0$ approximates to a Gauss sum (where recall $\chi_2$ is primitive modulo $q_2$), so we have
\begin{equation}
\sum_{k_0=1}^{q_2} 
\overline{\chi_2}(k_0) 
e\Big(\frac{k_0 l n}{q_2}\Big)
e\Big( \frac{k_0 l (\delta+i)}{c} \Big)
= \chi_2(l n) \tau(\overline{\chi_2}) + O_{q_2}(l/c').
\end{equation}
Therefore,
\begin{equation}
f_{\chi_1,\chi_2}\Big(\frac{a}{c} + \frac{i}{c}\Big) = c'  \frac{1+ \delta i}{4 \pi} \tau(\overline{\chi_2}) \chi_2(n) \sum_{l \leq c'} \frac{\chi_1(l) \chi_2(l)}{l^2} 
+ O(\log c').
\end{equation}
Extending the sum to $l > c'$ incurs an error term of size $O(1)$.  Assembling everything now completes the proof.
\end{proof}

\begin{proof}[Proof of Theorem \ref{large value}]
Suppose
$\gamma= \left(\begin{smallmatrix}
         a & b\\
         c & d
         \end{smallmatrix}\right ) \in \Gamma_0(q_1q_2)$, and
 $a=1 + nc'$.  Then $ad \equiv 1 \pmod{c}$, which by reduction modulo $c'$ implies $d = 1- mc'$ for some $m$.  Then, of course, $-d = -1 + m c'$.  By applying Proposition \ref{prop:fapprox} with $\frac{-d}{c}$, we obtain
\begin{equation}
f_{\chi_1, \chi_2}\Big(\frac{-d}{c} + \frac{i}{c}\Big)
= c' \frac{1- i}{4 \pi} \tau(\overline{\chi_2}) L(2, \chi_1 \chi_2) \chi_2(m) 
+ O_{q_1,q_2}(\log{c'}).
\end{equation}  
Hence by \eqref{Phi} we have
\begin{equation}
\phi_{\chi_1, \chi_2}(\gamma)
= c' \frac{\tau(\overline{\chi_2})}{4 \pi} L(2, \chi_1 \chi_2)
\Big((1+i) \chi_2(n) - (1-i) \chi_2(m) (\chi_1 \overline{\chi_2})(d) \Big)
+ O(\log{c'})
.
\end{equation}
Note $\chi_1(d) = 1$ since $q_1 | c'$ which implies $d \equiv 1 \pmod{q_1}$.  
Substituting this into \eqref{sum1}
implies Theorem \ref{large value}.
\end{proof}

\bibliographystyle{alpha}
\bibliography{Biblio}
\end{document}